\newtheorem{theorem}{Theorem}
\theoremstyle{plain}
\newtheorem*{question}{Question}
\newtheorem{lemma}{Lemma}
\theoremstyle{remark}
\newtheorem{remark}{Remark}
\begin{document}
\title[The Unimodality Conjecture for cubical polytopes]{The Unimodality Conjecture for cubical polytopes}

\author{L\'aszl\'o Major}
\address{L\'aszl\'o Major\newline%
\indent Demartment of Statistics,   \newline%
\indent Faculty of Social Sciences, \newline%
\indent  E\"{o}tv\"{o}s Lor\'and University, Budapest \newline%
\indent  P\'azm\'any P\'eter s\'eét\'any 1/A, H-1117, Budapest, Hungary }
\email{major@tatk.elte.hu}%

\author{Szabolcs T\'oth}
\address{Szabolcs T\'oth \newline%
\indent University of Szeged,  \newline%
\indent \'Opusztaszer, Karak\'as telep 466. H-6767, Hungary }
\email {tothsz77@postafiok.hu}%

\hspace{-4mm} \date{Dec 24, 2014}
\subjclass[2010]{Primary 05A10, 52B11; Secondary 52B12, 52B05} %
\keywords{cubical polytope,  neighborly cubical polytope,  $f$-vector, short $h$-vector, capping operation, unimodality,   Dehn-Sommerville Equations}

\begin{abstract}
Although the Unimodality Conjecture holds for some certain classes of cubical polytopes (e.g. cubes, capped cubical polytopes, neighborly cubical polytopes),  it fails for cubical polytopes in general. A 12-dimensional cubical polytope with non-unimodal face vector is constructed by using capping operations over a  neighborly cubical polytope with 2 to the power 131 vertices. For cubical polytopes, the Unimodality Conjecture is proved for dimensions less than  11. The first one-third of the face vector of a cubical polytope is increasing and its last one-third is decreasing in any dimension.
\end{abstract}

\maketitle
\section{Introduction}

The vector $\textbf{a}=(a_0,\ldots, a_{d-1})$ is  called \textit{unimodal} if for some (not necessarily unique) index $i$, $(a_0,\ldots, a_i)$ is non-decreasing  and  $(a_i,\ldots, a_{d-1})$ is non-increasing. If that is the case, we say that the unimodal vector $\textbf{a}$ \textit{peaks} at $i$. We say that the vector $\textbf{a}$ \textit{dips} at $i$ if $f_{j}>f_i<f_{k}$ for some $0\leq j<i<k\leq d-1$.  Clearly, the vector $\textbf{a}$ is unimodal if and only if it does not dip anywhere. The question of unimodality of the members of certain classes of vectors  has been of  long-standing interest   in algebra, combinatorics, graph theory and geometry  (see e.g. \cite{B1,sta}). 

 By  \textit{$f$-vector} (\textit{face vector}) we mean the vector $(f_0,\ldots,f_{d-1})$, where  $f_i$ is the number of $i$-dimensional proper faces of a $d$-polytope. The unimodality  of face vectors of  polytopes is  extensively studied (see e.g. \cite{bj1,eck,maj,SZ1}).   In 1961 (according to Bj\"orner \cite{bj1}), Motzkin conjectured that the $f$-vector of any polytope is unimodal.  The Unimodality Conjecture for polytopes was also stated by Welsh \cite{wel}. Danzer already showed in 1964 (see \cite[Section 2.1]{zi1}) that the conjecture cannot stand in its full generality, still leaving open the question: which natural classes of polytopes have unimodal $f$-vectors? 
 
 Examples of simplicial polytopes with non-unimodal face vectors were first published by Bj\"orner \cite{bj2}. Bj\"orner's original counterexamples were $24$-dimensional, but  subsequently also $20$-dimensional counterexamples were constructed by Bj\"orner \cite{bj3} and independently by Lee \cite{bil,lee}. It was shown by Eckhoff \cite{eck} that, in fact, this is the smallest dimension in which simplicial counterexamples can be found. In Section \ref{non}, we construct a $12$-dimensional cubical polytope with non-unimodal face vector and we show in Section \ref{small}, that there is no cubical counterexamples of dimensions less than 11. 
 
 Bj\"orner conjectured a partial unimodality property for polytopes. Namely, the face vectors of polytopes increase on the first quarter, and they decrease on the last quarter.
Bj\"orner has proved  in \cite{bj1}, that this conjecture holds for simplicial polytopes, the face vectors of simplicial polytopes moreover  increase up to the middle, and they decrease
on the last quarter.  In Section \ref{par}, we prove a similar  statement for cubical polytopes: their face vectors can dip only on the middle one-third part.

 \section{Cubes and cubical polytopes}
 The \textit{$d$-cube}  (denote by $C^d$)  is a polytope
  combinatorially equivalent to the unit cube $[0, 1]^d$. 
It is a well-known fact that the face vector of the $d$-cube is unimodal and peaks at $\lfloor \frac{d}{3} \rfloor$ (in addition, it also peaks at $\lfloor \frac{d+1}{3} \rfloor$).  This fact can also be expressed as follows: let $j\in \{0,1,2\}$ such that $d\equiv j \mod 3$, then the $f$-vector of the $d$-cube peaks at $\frac{d-j}{3}$. If $j=2$, then it also peaks  at $\frac{d-j}{3}+1$. The  $f$-vector of the $d$-cube has no more peaks.   
 The $f$-vector of the $d$-cube is strictly increasing up to $\lfloor \frac{d}{3} \rfloor$ and it is strictly decreasing from $\lfloor \frac{d+1}{3} \rfloor$ on. That is,
\begin{equation}\label{szig}
f_0(C^d)<\cdots <f_{\lfloor \frac{d}{3} \rfloor}(C^d)\hspace{3mm} \text{and}\hspace{3mm} f_{\lfloor \frac{d+1}{3}\rfloor}(C^d)>\cdots > f_{d-1}(C^d)
\end{equation}
 A $d$-polytope is called \textit{cubical} provided all its facets are combinatorially equivalent to $(d-1)$-cubes (in other words, its all proper faces are cubes).  
The following important combinatorial invariant of a cubical polytope is introduced  by Adin \cite{adi}. 

Let $P$ be a cubical $d$-polytope with f-vector $\textbf{f}=(f_0,\ldots,f_{d-1})$ and let $H$ be the $d\times d$ matrix given by
\begin{equation}\label{fhH}
H(i,j)=2^{-j}\binom{d-i-1}{d-j-1}, \hspace{4mm}  \text{for} \hspace{3mm} 0\leq i,j \leq d-1
\end{equation}
  Define the \textit{short cubical h-vector} $\textbf{h}^{(sc)}=(h_0^{(sc)},\ldots,h_{d-1}^{(sc)})$ of $P$ by $\textbf{h}^{(sc)}=\textbf{f}\cdot H^{-1}$. % where $H^{-1}$ is the inverse of $H$. 
Equivalently, the face vector of $P$ can be expressed by
\begin{equation}\label{fh}
\textbf{f}=\textbf{h}^{(sc)}\cdot H
\end{equation}
\begin{lemma}\label{egy}  Let $d$ be a positive integer and let $H$ be the matrix defined in \text{\textnormal{(\ref{fhH})}}.
\vspace{1.5mm}Then
\begin{enumerate}
\item[(\textit{i})] $H(i,i)<H(i,i+1)<\cdots <H(i, \lfloor\frac{d+2i}{3}\rfloor-1)\vspace{2mm}\leq H(i, \lfloor\frac{d+2i}{3}\rfloor)>\cdots >H(i,d-1)$ \\for $0\leq i \vspace{1.5mm}\leq d-1$
\item[(\textit{ii})]$H(i,j)=H(i+1,j)+2H(i+1,j+1)$ for $0\leq i,j \leq d-2$
\end{enumerate}
 \end{lemma}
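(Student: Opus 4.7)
The plan is to handle both parts by direct manipulation of the defining formula $H(i,j)=2^{-j}\binom{d-i-1}{d-j-1}$.

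Part (ii) is the warm-up. Starting from the right-hand side and pulling out $2^{-j}$,
\[
H(i+1,j)+2H(i+1,j+1) \;=\; 2^{-j}\!\left[\binom{d-i-2}{d-j-1}+\binom{d-i-2}{d-j-2}\right],
\]
so Pascal's identity $\binom{n}{k}+\binom{n}{k-1}=\binom{n+1}{k}$ collapses the bracket to $\binom{d-i-1}{d-j-1}$ and we recover $H(i,j)$.

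For part (i), the main tool is the ratio of consecutive entries in a fixed row. When $j \ge i$ both entries are positive and a short calculation gives
\[
\frac{H(i,j+1)}{H(i,j)} \;=\; \frac{d-j-1}{2(j-i+1)}.
\]
Consequently $H(i,j+1)\ge H(i,j)$ iff $d-j-1\ge 2(j-i+1)$, i.e.\ $3j \le d+2i-3$, equivalently $j+1\le(d+2i)/3$. Writing $m=\lfloor(d+2i)/3\rfloor$, this immediately yields the peak at column $j=m$ and the strict decrease $H(i,m)>H(i,m+1)>\cdots>H(i,d-1)$ for $j\ge m$, and on the left of $m$ the ratio is $\ge 1$.

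What is left is to pin down where the increasing inequalities are strict. Equality in the ratio requires $3j=d+2i-3$ exactly, which admits an integer solution only when $d+2i\equiv 0\pmod{3}$, and in that case only at $j=m-1$; in the other two residue classes the strict inequality $3j<d+2i-3$ already persists throughout $j\le m-1$. This accounts for the one possibly-weak step $H(i,m-1)\le H(i,m)$ in the statement while all earlier comparisons remain strict. The argument is essentially mechanical; I do not anticipate any real obstacle beyond this small case analysis on $d+2i$ modulo $3$.
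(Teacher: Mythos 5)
Your proof is correct, and it takes a genuinely different route from the paper's. The paper observes that the row $H(i,*)$ is, up to a shift by $i$ zeros and a scaling by $2^{1-d}$, exactly the face vector of a $(d-i-1)$-cube, and then cites the known strict unimodality of cube $f$-vectors (inequality~(\ref{szig})); part~(\textit{ii}) is handled, as you do, by Pascal's rule. Your approach instead works directly from the explicit formula: the ratio $H(i,j+1)/H(i,j)=\frac{d-j-1}{2(j-i+1)}$ (valid for $j\ge i$, where the entries are positive) reduces unimodality to the sign of $d+2i-3-3j$, and the residue-class check on $d+2i\bmod 3$ correctly locates the single possibly-weak step $H(i,m-1)\le H(i,m)$ (equality occurring precisely when $d+2i\equiv 0\pmod 3$) while giving strict inequalities everywhere else, including the strict decrease past $m$. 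The paper's argument is more conceptual — it explains the unimodality as inherited from cubes and keeps the cube picture in view for later lemmas — whereas yours is self-contained and more elementary, needing no prior fact about cube $f$-vectors; in effect you are reproving~(\ref{szig}) in passing. Both are sound, and yours has the small advantage of making the exact location of the weak inequality (and hence the content of Remark~\ref{rem}) transparent from a single inequality in $j$.
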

\begin{proof}Denote the $i^{th}$ row of $H$ by $H(i,*)$. Let us note that for all $0\leq i \leq d-1$,  $H(i,*)$  can be viewed  as the concatenation of two vectors. The first one is the null vector (with $i$ components) and the second one is $2^{1-d}\cdot \textbf{f}(C^{d-i-1})$, where $\textbf{f}(C^{d-i-1})$ is the face vector (supplemented with the last component $f_{d-i-1}=1$)  of a $(d-i-1)$-dimensional cube.  Therefore, the inequalities of $(i)$ follow from (\ref{szig}). Thus, $H(i,*)$ is unimodal and peaks at $\lfloor \frac{d-i-1+1}{3} \rfloor+i=\lfloor \frac{d-i}{3} \rfloor+i=\lfloor \frac{d+2i}{3} \rfloor$ and also at $\lfloor \frac{d-i-1}{3} \rfloor+i=\lfloor \frac{d+2i-1}{3} \rfloor$ for all $0\leq i \leq d-1$. 
The recursion of $(ii)$ follows from the so-called Pascal's rule, i.e. $\binom{n}{k}= \binom{n-1}{k-1}+\binom{n-1}{k}$.
\end{proof}
 \begin{remark}\label{rem}  Alternatively, we can separate three different cases as follows: let $j\in \{0,1,2\}$ such that $d-i\equiv j \mod 3$, then $H(i,*)$ peaks at $\frac{d+2i-j}{3}$. If $j=0$, then it also peaks  at $\frac{d+2i-j}{3}-1$. The vector $H(i,*)$ has no more peaks.  \end{remark}
 The following lemma 
 can be verified through a case-by-case analysis. The proof based on the inequalities $(i)$ and the recursion $(ii)$ of Lemma \ref{egy} and some elementary properties of the binomial coefficients. We omit the details. 
Alternatively, it can also be proved by adapting the methods (with some necessary  modifications) applied by Bj\"orner in the proof of Lemma 6 and Lemma 7  of \cite{bj1}.
 \begin{lemma}\label{bi}
Let $d$ be a positive integer and $0\leq i\leq k\leq d-1$. Let $H$ be the matrix defined in \text{\textnormal{(\ref{fhH})}}. Let $a_j=H(i,j)+H(k,j)$ for $0\leq j \leq d-1$. Then
$$a_0<\cdots <a_{\lfloor\frac{d+2i}{3}\rfloor-1}\vspace{0mm}\leq a_{ \lfloor\frac{d+2i}{3}\rfloor}>\cdots >a_{d-1}$$
\end{lemma}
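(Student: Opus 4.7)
The plan is to partition $\{0, 1, \ldots, d-1\}$ by the two peak indices and analyze $a_j = H(i,j) + H(k,j)$ piecewise. Let $p_i := \lfloor(d+2i)/3\rfloor$ and $p_k := \lfloor(d+2k)/3\rfloor$ denote the right peak indices of $H(i,*)$ and $H(k,*)$ from Lemma~\ref{egy}(i); since $i \le k$ one has $p_i \le p_k$, so the index range splits into the left tail $[0, p_i - 1]$, the middle segment $[p_i, p_k - 1]$, and the right tail $[p_k, d-1]$.

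The two tails follow almost immediately from Lemma~\ref{egy}(i). On the left tail, both rows are weakly non-decreasing from $j$ to $j+1$ (since $j + 1 \le p_i \le p_k$), and at least one of them is strictly increasing as soon as $a_{j+1} > 0$ (row $i$ strictly from index $i$ onward, row $k$ from index $k$ onward), which forces $a_j < a_{j+1}$. On the right tail, both rows are strictly decreasing, giving $a_j > a_{j+1}$. The boundary inequality $a_{p_i - 1} \le a_{p_i}$ at the peak is inherited term-by-term from Lemma~\ref{egy}(i), with equality triggered precisely by the double-peak situation of Remark~\ref{rem} for row $i$.

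The main obstacle is the middle segment $p_i \le j \le p_k - 1$, where row $i$ is strictly decreasing while row $k$ is weakly increasing; one must prove that the decrease of row $i$ strictly dominates the increase of row $k$, i.e.\ that $H(i,j) - H(i,j+1) > H(k,j+1) - H(k,j)$. Both sides can be put into closed form via the ratio identity $H(\ell, j+1)/H(\ell, j) = (d - j - 1)/\bigl(2(j - \ell + 1)\bigr)$ coming from (\ref{fhH}), reducing the inequality to an elementary comparison of binomial coefficients that is verified by a case split on the residues of $d - i$ and $d - k$ modulo~$3$ identified in Remark~\ref{rem}. A cleaner alternative is induction on $k - i$: the base case $k = i$ gives $a_j = 2 H(i,j)$ and is immediate from Lemma~\ref{egy}(i), while the inductive step uses $H(k-1, j) = H(k, j) + 2 H(k, j+1)$ from Lemma~\ref{egy}(ii) to relate the $(i, k)$-problem to the $(i, k-1)$-problem plus a controllable correction on the middle segment, in the spirit of Bj\"orner's arguments for Lemmas~6 and~7 of \cite{bj1}.
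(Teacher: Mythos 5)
The paper gives no proof of Lemma~\ref{bi}: it says only that the lemma ``can be verified through a case-by-case analysis'' using Lemma~\ref{egy}(i), (ii) and elementary binomial facts, or ``by adapting the methods \ldots of Bj\"orner,'' and explicitly omits the details. Your sketch follows exactly these hints (three-part decomposition via Lemma~\ref{egy}(i), then a binomial computation or a Bj\"orner-style induction via Lemma~\ref{egy}(ii)), so in terms of route you are doing what the paper proposes, and your tail and boundary arguments are sound. The value you add is making the decomposition into left tail $[0,p_i-1]$, middle $[p_i,p_k-1]$, right tail $[p_k,d-1]$ explicit and correctly isolating the crux: on the middle segment row $i$ is falling while row $k$ is rising, and one must show $H(i,j)-H(i,j+1) > H(k,j+1)-H(k,j)$.

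However, that crux is exactly where your write-up, like the paper, stops short of a proof, and I would push back on the claim that it is ``an elementary comparison \ldots verified by a case split on the residues \ldots mod $3$.'' Writing $m = d-j-1$ and using the ratio identity, the middle-segment inequality becomes
$$\binom{d-i-1}{m-1}\,(2d-2i-3m) \;>\; \binom{d-k-1}{m-1}\,(3m-2d+2k),$$
and the first factor on the left can exceed the first factor on the right by an arbitrarily large multiplicative margin when $k-i$ is large, so the crude estimate $\binom{d-k-1}{m-1}\leq\binom{d-i-1}{m-1}$ is \emph{not} enough (it reduces the claim to $j>(d+i+k-3)/3$, which fails at $j=p_i$ once $k>i+1$). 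You genuinely need a quantitative lower bound on the ratio $\binom{d-i-1}{m-1}/\binom{d-k-1}{m-1}$, not merely a residue case split, and you do not indicate how to get it. Your alternative induction on $k-i$ has a directional problem: Lemma~\ref{egy}(ii) gives $a^{(i,k)}_j = a^{(i,k-1)}_j - 2H(k,j+1)$, so passing from $(i,k-1)$ to $(i,k)$ \emph{subtracts} a term; for $j$ just past $p_k$ the subtracted amount $2\bigl(H(k,j+1)-H(k,j+2)\bigr)$ is positive, so the inductive inequality $a^{(i,k-1)}_j > a^{(i,k-1)}_{j+1}$ does not automatically propagate, and you do not explain the ``controllable correction.'' Finally, a small remark on the left tail: your proviso ``as soon as $a_{j+1}>0$'' is the right instinct, because for $i\geq 2$ one has $a_0=a_1=\cdots=a_{i-1}=0$ and the lemma's strict chain $a_0<a_1<\cdots$ is literally false in those indices; this is an imprecision in the lemma's statement (only the range $j\geq i$ is used in Theorem~\ref{thp}), and it is worth flagging explicitly rather than leaving it implicit in a conditional clause.
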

The following important lemma
is needed to prove Theorem \ref{thp} and Theorem \ref{kis}. Statement $(ii)$ is a brief formulation of the Cubical Dehn-Sommerville Equations.
\begin{lemma}[Adin \cite{adi}, Lemma 1, Corollary 10]\label{ad1}
Let $P$ be a cubical $d$-polytope. Then
\begin{enumerate}
\item[(\textit{i})] all the components of $\textbf{h}^{(sc)}(P)$ are positive integers,
\item[(\textit{ii})] $\textbf{h}^{(sc)}(P)$ is symmetric: $h_i^{(sc)}=h_{d-i-1}^{(sc)}$, $(0\leq i \leq d-1),$
\item[(\textit{iii})] $\textbf{h}^{(sc)}(P)$ is unimodal.
\end{enumerate}
 \vspace{-1mm}\end{lemma}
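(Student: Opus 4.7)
The plan is to follow Adin's approach, which factors through a ``long'' cubical $h$-vector $\textbf{h}^{(c)}(P)$ obtained from a shelling of $\partial P$. Since every polytope has a shellable boundary complex (Bruggesser--Mani), I pick a line shelling $F_1,\ldots,F_n$ of $\partial P$. For each facet $F_k$ I define its \emph{restriction} $r(F_k)$ to be the unique minimal face of $F_k$ not contained in $F_1\cup\cdots\cup F_{k-1}$, and set $h^{(c)}_i(P)=\#\{k:\dim r(F_k)=i\}$. This immediately gives a vector of nonnegative integers. A direct binomial manipulation then converts the definition (\ref{fhH}) of $\textbf{h}^{(sc)}$ into an explicit nonnegative combination of entries of $\textbf{h}^{(c)}$ (essentially, two consecutive entries, reflecting that the ``short'' vector forgets the distinction between a cube face and its opposite). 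This establishes part (\textit{i}): each $h^{(sc)}_i$ is a positive integer, positivity coming from the fact that any shelling of $\partial P$ necessarily contributes restriction faces of every dimension between $0$ and $d-1$.

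For (\textit{ii}), reversing the shelling turns $F_1,\ldots,F_n$ into another shelling in which the restriction face of $F_{n+1-k}$ is the \emph{antipodal} face of $r(F_k)$ inside the cube $F_k$. Since antipodal faces in a $(d-1)$-cube have complementary dimensions summing to $d-2$, the reversal yields $h^{(c)}_i=h^{(c)}_{d-1-i}$, and consequently $h^{(sc)}_i=h^{(sc)}_{d-1-i}$. Equivalently, (\textit{ii}) can be read off the Cubical Dehn--Sommerville equations, which follow directly from Euler's relation applied to links of faces of $\partial P$.

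The main obstacle is (\textit{iii}). My plan is to identify $\textbf{h}^{(sc)}(P)$ with the $h$-vector of a simplicial Gorenstein$^*$ sphere built from $P$---for instance, the simplicial complex obtained by a canonical pulling triangulation of $\partial P$, or a closely related simplicial object whose $h$-vector computes out to exactly $\textbf{h}^{(sc)}(P)$. Once this identification is in hand, the $g$-theorem of Billera--Lee and Stanley supplies unimodality of the initial half of $\textbf{h}^{(sc)}(P)$, and the symmetry from (\textit{ii}) propagates unimodality to the second half. The technical heart of the argument is producing the correct simplicial object and verifying the coincidence of $h$-vectors: without such a geometric bridge, unimodality does not follow from positivity and symmetry alone, so this step is where all the real work lives.
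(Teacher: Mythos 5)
The paper does not prove this lemma at all: it simply cites Adin \cite{adi} (Lemma~1 and Corollary~10 there), so there is no in-paper proof to compare against, and the relevant comparison is to Adin's own argument. Adin's route is quite different from yours. His Lemma~1 shows that for any cubical complex $\mathcal{K}$ one has $\textbf{h}^{(sc)}(\mathcal{K})=\sum_{v}\textbf{h}(\mathrm{lk}(v,\mathcal{K}))$, the sum of the ordinary \emph{simplicial} $h$-vectors of all vertex links. Taking $\mathcal{K}=\partial P$, each link is the boundary complex of the vertex figure $P/v$, a simplicial $(d-1)$-polytope, so $\textbf{h}^{(sc)}(P)$ is expressed as a sum of $h$-vectors of boundaries of simplicial $(d-1)$-polytopes, all of length $d$ and all symmetric with the same center $\lfloor\frac{d-1}{2}\rfloor$. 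Positivity, Dehn--Sommerville symmetry, and $g$-theorem unimodality of each summand then transfer immediately to the sum, precisely because the summands all rise to a common midpoint. This decomposition is the real content; once you have it, (i), (ii), (iii) are one-line corollaries.

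Your proposal for (iii) has a concrete gap that the vertex-link decomposition would fill. A pulling triangulation of $\partial P$ is a simplicial $(d-1)$-sphere whose $h$-vector has $d+1$ entries, indexed $0,\dots,d$, starting with $h_0=1$; the short cubical $h$-vector $\textbf{h}^{(sc)}(P)$ has only $d$ entries and begins with $h_0^{(sc)}=f_0(P)$. These two vectors can never coincide. Already for a square, $\textbf{h}^{(sc)}=(4,4)$, while any triangulation of the boundary $4$-cycle has $h=(1,2,1)$. So there is no single Gorenstein$^{*}$ complex doing the job; the correct ``simplicial bridge'' is not one sphere but the multiset of vertex figures, and without that identification the $g$-theorem has no object to act on. Relatedly, your claim under (i) that any shelling of $\partial P$ ``necessarily contributes restriction faces of every dimension'' is exactly the kind of positivity assertion that does not follow from shellability alone; for Adin it again falls out of the link decomposition (each summand $\textbf{h}(\partial(P/v))$ is entrywise positive). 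Your part (ii), via shelling reversal or the cubical Dehn--Sommerville relations, is fine and is essentially one of the standard proofs.
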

\section{Capped and neighborly cubical polytopes}\label{cnc}
 
 There is a cubical analogue of the simplicial stacking operation, the so-called \textit{capping operation} described by Jockusch \cite{joco} as follows: let $Q$ be a cubical $d$-polytope, then the polytope $P$ is called a \textit{capped polytope over} $Q$ if there is a 
$d$-cube $C$ such that $P = Q \cup C$ and $Q \cap C$ is a facet of both $Q$ and $C$. Roughly speaking, $P$ is obtained by glueing a cube onto a facet of $Q$. A polytope $P$ is said to be an $n$\textit{-fold capped polytope over} $Q$ if there is a sequence $P_0, P_1,\ldots,P_n$ ($1\leq n$) of polytopes such that for $i=0,\ldots,n-1$
\begin{enumerate}
\item[(\textit{i})] $P_{i+1}$ is a capped polytope over $P_i$,
\item[(\textit{ii})] $P_0=Q$ and $P_n=P$.
\end{enumerate}
For $n=0$, the $n$\textit-fold capped polytope over $Q$ is $Q$ itself. A polytope is said to be $n$\textit{-fold capped} (or simply \textit{capped}) if it is an $n$-fold capped polytope over a cube. Capped polytopes are the cubical analogues of the (simplicial) stacked polytopes. 

Since the capping operation destroys a cubical facet while it creates $2d-1$ new ones, the capping operation increases the component $f_k$ of the face vector  by $2^{d-k}\binom{d}{k}-2^{d-k-1}\binom{d-1}{k}$ if $0\leq k \leq d-2$ and by $2(d-1)$ if $k=d-1$. Hence, if $P$ is an  $n$-fold capped polytope over $Q$, then we have 
\begin{equation}\label{cap}
 f_k(P)=\left\{ \begin{array}{ll}
f_k(Q)+n\Big(2^{d-k}\binom{d}{k}-2^{d-k-1}\binom{d-1}{k}\Big) &
\hspace{3mm}\vspace{0mm}\textrm{if $0\leq k \leq d-2$} \\
f_k(Q)+2n(d-1) & \hspace{3mm}\textrm{if $k=d-1$} 
\end{array} \right. 
\end{equation}
By using (\ref{cap}) and the fact that the face vector of the $d$-cube is unimodal and peaks at $\lfloor \frac{d+1}{3} \rfloor$, it is not difficult to show that the face vector of a capped $d$-polytope is  also unimodal and also peaks at $\lfloor \frac{d+1}{3} \rfloor$.

The \textit{$k$-skeleton} of  a $d$-polytope is the union of its $k$-dimensional faces. A cubical $d$-polytope (with $2^n$ vertices for some $n\geq d$) is called \textit{neighborly cubical} provided its $(\lfloor \frac{d}{2} \rfloor -1)$-skeleton is combinatorially equivalent to the $(\lfloor \frac{d}{2} \rfloor -1)$-skeleton of a cube. The concept of neighborly cubical polytopes was introduced by Babson, Billera and Chan \cite{bill}. Neighborly cubical polytopes can be considered as the cubical analogues of the (simplicial) cyclic polytopes. 
It is proved in \cite{maj1}, that the Unimodality Conjecture holds for neighborly cubical polytopes. The number of vertices and the dimension of a neighborly cubical polytope determine its $f$-vector  and it is given by (see \cite{maj1})
\begin{displaymath}
 f_k=\left\{ \begin{array}{ll}
2^{n-k}\displaystyle\sum_{i=0}^{\frac{d-2}{2}} 
\textstyle
\Big(\binom{d-i-1}{k-i}+\binom{i}{k-d+i+1}\Big)\binom{n-d+i}{i} &
\textrm{if $d$ is even} \\
2^{n-k}\Bigg(\displaystyle\sum_{i=0}^{\frac{d-3}{2}} 
\textstyle
\Big(\binom{d-i-1}{k-i}+\binom{i}{k-d+i+1}\Big)\binom{n-d+i}{i}+\displaystyle\sum_{j=0}^{n-d}2^{-j}\textstyle\binom{\frac{d-1}{2}}{d-k-1}
\binom{n-\frac{d+3}{2}-j}{n-d-j}\Bigg) & \textrm{if $d$ is odd} 
\end{array} \right. 
\end{displaymath} 
By using the above explicit formula, it can be shown that the $f$ -vector of a neighborly cubical polytope peaks approximately at $\lfloor \frac{2d}{3} \rfloor$ if $n$ is large \vspace{2mm} enough.

\section{Partial unimodality for cubical polytopes}\label{par}

In this section, we show that  the maximal component of the face vector of a cubical polytope can occur only in the middle one-third part (i.e. between $\lfloor \frac{d}{3} \rfloor$ and $\lfloor \frac{2d}{3} \rfloor$), furthermore, the violation of the Unimodality Conjecture is possible only in this part. 
\begin{theorem}[partial unimodality for cubical polytopes]\label{thp}
Let $P$ be a cubical $d$-polytope with face vector $(f_0,\ldots,f_{d-1})$. Then 
\begin{enumerate}
\item[(\textit{i})]$f_0<\cdots <f_{\lfloor \frac{d}{3} \rfloor-1}\leq f_{\lfloor \frac{d}{3} \vspace{1mm}\rfloor}$ 
\item[(\textit{ii})] $f_{\lfloor \frac{2d}{3} \rfloor}>\cdots>f_{d-1}$
\end{enumerate}
\end{theorem}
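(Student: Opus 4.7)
The plan is to combine the decomposition $\mathbf{f}=\mathbf{h}^{(sc)}\cdot H$ from (\ref{fh}) with the symmetry and positivity of the short cubical $h$-vector furnished by Lemma \ref{ad1}, thereby reducing partial unimodality of $\mathbf{f}$ to the joint unimodality of paired rows of $H$ that is the content of Lemma \ref{bi}.

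First I would expand
\[
f_j \;=\; \sum_{i=0}^{d-1} h_i^{(sc)}\,H(i,j)
\]
and invoke $h_i^{(sc)}=h_{d-1-i}^{(sc)}$ to pair the index $i$ with $d-1-i$, yielding
\[
f_j \;=\; \sum_{i=0}^{\lfloor (d-1)/2\rfloor} h_i^{(sc)}\,A_i(j),
\]
where $A_i(j):=H(i,j)+H(d-1-i,j)$ when $i<d-1-i$, and $A_i(j):=H(i,j)$ in the lone middle case where $d$ is odd and $i=(d-1)/2$. Lemma \ref{bi} (applied with $k=d-1-i$), or Lemma \ref{egy}(\textit{i}) in the degenerate case, then shows that each $A_i$ is unimodal: strictly increasing on $\{0,\ldots,p_i-1\}$, non-decreasing across the step $p_i-1\to p_i$, and strictly decreasing on $\{p_i,\ldots,d-1\}$, where $p_i=\lfloor (d+2i)/3\rfloor$.

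The crucial numerical observation is that for every $i$ in the summation range one has
\[
\lfloor d/3\rfloor \;\leq\; p_i \;\leq\; \lfloor (2d-1)/3\rfloor \;\leq\; \lfloor 2d/3\rfloor.
\]
For part (\textit{i}) this says that on $\{0,\ldots,\lfloor d/3\rfloor-1\}$ each $A_i$ is still in its strictly increasing regime, with at worst a weak last step from index $\lfloor d/3\rfloor-1$ to $\lfloor d/3\rfloor$; for part (\textit{ii}) it says that throughout $\{\lfloor 2d/3\rfloor,\ldots,d-1\}$ each $A_i$ is already in its strictly decreasing regime. Since every $h_i^{(sc)}$ is a positive integer by Lemma \ref{ad1}(\textit{i}), forming the corresponding non-negative linear combination preserves both the strict and the weak inequalities, and conclusions (\textit{i}) and (\textit{ii}) follow at once.

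There is no deep obstacle here: Lemma \ref{bi} carries all of the combinatorial weight. The only care required is bookkeeping --- tracking precisely where strict inequality degrades to $\leq$ (exactly at the transition $p_i-1\to p_i$ for each summand, which accounts for the single $\leq$ at the right end of part (\textit{i})) and treating the singleton middle summand separately when $d$ is odd. Both points are mechanical.
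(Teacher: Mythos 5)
Your decomposition and overall strategy coincide with the paper's: pair the rows $H(i,*)$ and $H(d-1-i,*)$ into the vectors $A_i$ (which are exactly the paper's $\mathbf{b}^i$), invoke Lemma \ref{bi} for their unimodality, and pass the inequalities through the non-negative combination $\mathbf{f}=\sum_i h_i^{(sc)}A_i$. The one step that would fail as written is the claim that each $A_i$ is strictly increasing on all of $\{0,\ldots,p_i-1\}$. This is what Lemma \ref{bi}'s display literally asserts, but it cannot hold in general: since $H(i,j)=2^{-j}\binom{d-i-1}{d-j-1}$ vanishes for $j<i$, and $H(d-1-i,j)$ vanishes for $j<d-1-i$, we have $A_i(0)=\cdots=A_i(i-1)=0$, so $A_i$ is constant, not strictly increasing, on its initial segment whenever $i\geq 2$. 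The strict chain genuinely begins only at index $i$. If all one retains from this is that each $A_i$ is weakly increasing on $\{0,\ldots,\lfloor d/3\rfloor-1\}$, the non-negative combination yields only $f_j\leq f_{j+1}$ there, not the strict inequalities asserted in part (\textit{i}).

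The paper's own proof closes this hole by isolating the $i=0$ summand: it records that $\mathbf{b}^i_0\leq\cdots\leq\mathbf{b}^i_{\lfloor d/3\rfloor}$ for all $i$, but that the chain is \emph{strict} for $i=0$; since $h_0^{(sc)}>0$ by Lemma \ref{ad1}(\textit{i}), this single strictly increasing term forces $f_0<\cdots<f_{\lfloor d/3\rfloor-1}\leq f_{\lfloor d/3\rfloor}$ in the sum. Part (\textit{ii}) has no analogous subtlety, since every $A_i$ is genuinely strictly decreasing from $p_i$ onward and $p_i\leq\lfloor 2d/3\rfloor$ throughout the range of $i$. With this one correction --- replace ``each $A_i$ is strictly increasing on the prefix'' by ``each $A_i$ is weakly increasing there, and $A_0$ is strictly increasing'' --- your argument becomes exactly the paper's.
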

\begin{proof}Let $H$ be the matrix defined in \text{\textnormal{(\ref{fhH})}}. For $0\leq i\leq {d-1}$,  denote the $i^{th}$ row of $H$ by $H(i,*)$. For $0\leq i\leq \lfloor \frac{d-1}{2}\rfloor$, let us define the vectors $\textbf{b}^i$ by
\begin{displaymath}
 \textbf{b}^i=\left\{ \begin{array}{ll}
H(i,*)+H(d-i-1,*) &
\hspace{3mm}\textrm{if $2i\neq d-1$} \vspace{2mm}\\
H(i,*) & \hspace{3mm}\textrm{if $2i=d-1$} 
\end{array} \right. 
\end{displaymath}
By using the above notation and the symmetric property of the short $h$-vector (see $(ii)$ of Lemma \ref{ad1}), the relation (\ref{fh}) can be rewritten  as 
$$\textbf{f}(P)=\sum^{\lfloor\frac{d-1}{2}\rfloor}_{i=0}h^{(sc)}_i\textbf{b}^i$$ 
From  Lemma \ref{bi} follows that $\textbf{b}^i$ is unimodal and peaks at $\lfloor \frac{d+2i}{3} \rfloor$ for all $0\leq i \leq \lfloor\frac{d-1}{2} \rfloor$  Furthermore, we have
$$\textbf{b}^i_i<\cdots <\textbf{b}^i_{\lfloor\frac{d+2i}{3}\rfloor-1}\vspace{0mm}\leq \textbf{b}^i_{ \lfloor\frac{d+2i}{3}\rfloor}>\cdots >\textbf{b}^i_{d-1}$$
Therefore, all the vectors $\textbf{b}^i$ peak between $\lfloor \frac{d+2\cdot 0}{3} \rfloor=\lfloor \frac{d}{3} \rfloor$ and $\lfloor \frac{d+2\lfloor \frac{d}{2} \rfloor}{3} \rfloor=\lfloor \frac{2d}{3} \rfloor$ and

$$\textbf{b}^i_0\leq\cdots \leq\textbf{b}^i_{\lfloor\frac{d}{3}\rfloor-1}\vspace{0mm}\leq \textbf{b}^i_{ \lfloor\frac{d}{3}\rfloor} \text{ and }\textbf{b}^i_{ \lfloor\frac{2d}{3}\rfloor}>\cdots >\textbf{b}^i_{d-1}$$

for all $0\leq i\leq \lfloor \frac{d-1}{2}\rfloor$. Furthermore, $\textbf{b}^i_0<\cdots <\textbf{b}^i_{\lfloor\frac{d}{3}\rfloor-1}\vspace{0mm}\leq \textbf{b}^i_{ \lfloor\frac{d}{3}\rfloor}$ for  $i=0$. Consequently, $\textbf{f}(P)$  has the stated property, since $\textbf{f}(P)$ is a non-negative linear combination of the vectors  $\textbf{b}^i$ (see $(i)$ of Lemma \ref{ad1}). 
\end{proof}
\begin{remark}\label{r2} In fact,  we could state a little bit more about $\textbf{f}(P)$. Namely, it can be easily checked that if $d\equiv j \mod 6 $ for some $j\in \{0,2,3\}$, then 
 $\textbf{b}^{\lfloor \frac{d}{2} \rfloor}$ peaks at $\lfloor \frac{2d}{3} \rfloor-1$. Consequently, the sequence $(f_{\lfloor \frac{2d}{3} \rfloor-1},f_{\lfloor \frac{2d}{3} \rfloor},\ldots,f_{d-1})$ is strictly decreasing if the dimension of $P$ is congruent to $0$ or $2$ or $3$ modulo $6$.\end{remark}
 
\section{Non-unimodal $f$-vectors}\label{non}
According to Theorem \ref{thp},  capped polytopes and  neighborly cubical polytopes are extremal among all cubical polytopes, in the sense that the maximal component of their $f$-vectors occur at the most far away position from their middle. Since the peaks of the $f$-vectors of a capped and a neighborly cubical polytope are situated  as far away from each other as possible, it seems to be reasonable to involve these polytopes in constructing counterexamples to the Unimodality Conjecture for cubical polytopes. In fact, most of the  non-cubical counterexamples were also based on this idea. For instance, Danzer used stacked polytopes (with peaks at $\lfloor \frac{d}{2} \rfloor$) and crosspolytopes (with peaks at $\lfloor \frac{2d}{3} \rfloor$ ) for his first counterexamples  (according to Ziegler  \cite[Section 2.1]{zi1}).  

According to Theorem \ref{thp} and Remark \ref{r2}, for all cubical $12$-polytopes,
$$f_0<\cdots <f_3\leq f_4 \hspace{2mm} \text{and} \hspace{2mm}  f_7>\cdots >f_{11}$$ 
Therefore, the possible positions for a dip are $5$ and $6$. The starting point of our construction is a neighborly cubical $12$-polytope, to which we apply the capping operation. Due to (\ref{cap}), by applying the capping operation, 
 we are adding a vector  with peak at $\lfloor \frac{d}{3} \rfloor$
to a vector whose peak is at $\lfloor \frac{2d}{3} \rfloor$. 

   Joswig and Ziegler proved in \cite{jos} that there exists a $d$-dimensional neighborly cubical polytope  with $2^n$ vertices for any $n\geq d\geq 2$. They  constructed neighborly cubical polytopes as linear projections of cubes. For the base of our counterexample, we chose a neighborly cubical $12$-polytope with $2^{131}$ vertices, to which we apply the capping operation $1.841\cdot 10^{42}$ times. Then we obtain a cubical polytope with non-unimodal $f$-vector.
By using (\ref{cap}) and the formula of the $f$-vector of neighborly cubical polytopes (see in Section \ref{cnc}), it can be computed, that    the $f$-vector of this polytope dips  at $5$ indeed.

\begin{theorem}
There exists a  $12$-dimensional cubical polytope with $3.770370722\cdot 10^{45}$ 
vertices for which $f_4 >f_5 < f_6$. Consequently, the Unimodality Conjecture fails for  cubical polytopes in general.
\end{theorem}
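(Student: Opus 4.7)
The plan is to exhibit the polytope directly and verify the three values by explicit computation, following the strategy sketched in the paragraph preceding the theorem. First, by Joswig and Ziegler \cite{jos} there exists a neighborly cubical $12$-polytope $Q$ with $2^{131}$ vertices, and since $d=12$ is even the full $f$-vector of $Q$ is determined by the explicit formula recalled in Section \ref{cnc}. Because $n=131$ is much larger than $d=12$, the observation at the end of Section \ref{cnc} ensures that $\mathbf{f}(Q)$ peaks at (or very near) $\lfloor 2d/3\rfloor=8$; in particular $Q$ is itself unimodal, so no dip is yet present.

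Next, I would let $P$ be the $N$-fold capped polytope over $Q$ for an integer $N$ in the vicinity of $1.841\cdot 10^{42}$. By (\ref{cap}),
\begin{equation*}
f_k(P)=f_k(Q)+N\Bigl(2^{d-k}\binom{d}{k}-2^{d-k-1}\binom{d-1}{k}\Bigr) \qquad (0\le k\le d-2),
\end{equation*}
so $\mathbf{f}(P)$ is the sum of $\mathbf{f}(Q)$ and $N$ times a fixed ``cube-increment'' vector. That increment vector is, up to a common power of $2$, the face vector of the $12$-cube with one facet removed; by (\ref{szig}) it is unimodal and peaks at $\lfloor d/3\rfloor=4$. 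Capping therefore deposits additional mass preferentially near index $4$, while the neighborly contribution continues to peak near index $8$, so for $N$ in an appropriate window the two humps sum to a vector with a strict dip between them.

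By Theorem \ref{thp} and Remark \ref{r2}, the only indices at which a dip can occur for a cubical $12$-polytope are $k=5$ and $k=6$, so it suffices to produce one value of $N$ for which the dip falls at $k=5$. The remainder is a direct numerical check: plug $d=12$, $n=131$ into the even-dimensional neighborly formula of Section \ref{cnc}, plug $d=12$ into the cube-increment expression above, and verify for the stated $N$ that both $f_4(P)-f_5(P)>0$ and $f_6(P)-f_5(P)>0$. A convenient consistency check is that $f_0(P)=2^{131}+2048\,N$ reproduces the claimed vertex count $3.770370722\cdot 10^{45}$. I expect the main obstacle to be arithmetic rather than conceptual: the relevant contributions to $f_5(P)$ and $f_6(P)$ are both of order $10^{45}$ and their differences are several orders of magnitude smaller, so enough precision has to be carried to resolve the signs, and $N$ must be chosen inside the narrow window where the dip actually lands at index $5$ instead of being smoothed away as the cube-hump grows to dominate the neighborly hump.
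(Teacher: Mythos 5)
Your proposal follows the paper's construction exactly: take the Joswig--Ziegler neighborly cubical $12$-polytope with $2^{131}$ vertices, cap it roughly $1.841\cdot 10^{42}$ times, use (\ref{cap}) plus the explicit neighborly $f$-vector formula to compute all entries, invoke Theorem~\ref{thp} (with Remark~\ref{r2}) to narrow the possible dip to indices $5$ and $6$, and verify numerically that the dip lands at $5$; the vertex-count check $f_0(P)=2^{131}+2048N$ is also the one the paper implicitly uses. This is the same approach, and like the paper you correctly recognize that the remaining step is a careful (high-precision) arithmetic verification rather than a new idea.
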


%: I would like to mention from a kind of interesting historical perspective that it has been the immediate second application of the freshly proven famous G-theorem that has made such a kind of things 
%like construction of blablabla.was the construction of  simplicial polytopes with non-unimodal $f$-vectors(sufficiency) 

As a matter of historical interest, we mention that Lee's simlicial counterexample was one of the first  applications of  the sufficiency part of the famous $g$-theorem  (see  Billera and Lee, Corollary  2 in \cite{bil}).  

\section{Unimodality for small dimensional cubical polytopes}\label{small}
Using the relations between the $f$-vectors and the $g$-vectors of simplicial polytopes, Eckhoff \cite{eck} has shown that the $f$-vector of a simplicial polytope is unimodal if its dimension is less than $20$. We prove the following statement in a similar way.
\begin{theorem}\label{kis}
The $f$-vectors of cubical $d$-polytopes are unimodal for all $d\leq 10$.
\end{theorem}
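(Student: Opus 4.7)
By Theorem \ref{thp}, a dip of $\mathbf{f}(P)$ can occur only at an index $p$ with $\lfloor d/3\rfloor<p<\lfloor 2d/3\rfloor$; Remark \ref{r2} tightens the upper bound to $\lfloor 2d/3\rfloor-1$ whenever $d\equiv 0,2,3\pmod 6$. A direct tabulation then leaves no candidate dip positions at all for $d\leq 4$ and for $d=6$, exactly one candidate position for each $d\in\{5,7,8,9\}$, and the two positions $p\in\{4,5\}$ for $d=10$. Thus Theorem \ref{kis} reduces to verifying a handful of explicit inequalities between consecutive components of $\mathbf{f}(P)$.

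For each surviving pair $(d,p)$ the plan is to use the decomposition
$$\mathbf{f}(P)=\sum_{i=0}^{\lfloor(d-1)/2\rfloor}h_i^{(sc)}\,\mathbf{b}^i$$
from the proof of Theorem \ref{thp}, together with Lemma \ref{ad1}: the $h_i^{(sc)}$ are positive integers, symmetric under $i\mapsto d-i-1$, and $h_0^{(sc)}\leq h_1^{(sc)}\leq\cdots\leq h_{\lfloor(d-1)/2\rfloor}^{(sc)}$. To rule out a dip at $p$ it suffices to show that either $f_p\geq f_{p-1}$ or $f_p\geq f_{p+1}$, and each such difference is an explicit integer linear form $\sum_i h_i^{(sc)}\bigl(\mathbf{b}^i_p-\mathbf{b}^i_{p\mp 1}\bigr)$ in the $h_i^{(sc)}$. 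By Lemma \ref{bi}, the coefficient $\mathbf{b}^i_p-\mathbf{b}^i_{p-1}$ is nonnegative exactly when $p\leq\lfloor(d+2i)/3\rfloor$, so the potentially negative contributions are confined to a short range of small indices $i$.

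The verification is now a piece of elementary linear algebra. Writing $h_i^{(sc)}=g_0+g_1+\cdots+g_i$ with $g_0\geq 1$ and $g_j\geq 0$ for $j\geq 1$, the form $\sum_i h_i^{(sc)}c_i$ becomes a nonnegative combination of the tail sums $\sum_{i\geq j}c_i$ (for $0\leq j\leq \lfloor(d-1)/2\rfloor$); hence, for each $(d,p)$ it is enough to check that every such tail sum of the binomial expressions $c_i=\mathbf{b}^i_p-\mathbf{b}^i_{p\mp 1}$ is nonnegative. Each of these checks is a small computation with rows of the matrix $H$ and with Pascal-type identities on binomial coefficients, so the finite list of cases can be worked out directly.

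The hardest case will be $d=10$, where Remark \ref{r2} does not apply, both $p=4$ and $p=5$ must be excluded, and several $\mathbf{b}^i$-differences change sign inside the same narrow window. There a single tail-sum calculation in one direction may not close, and one will need to combine the inequalities $f_p\geq f_{p-1}$ and $f_p\geq f_{p+1}$, or to invoke the symmetry $h_i^{(sc)}=h_{d-i-1}^{(sc)}$ before pairing small-index negative terms with large-index positive ones via unimodality. Absent any cubical analogue of the $g$-theorem, no further linear constraints on $\mathbf{h}^{(sc)}$ are available, so the whole argument must be carried by positivity, symmetry and unimodality from Lemma \ref{ad1}; but for the short list above these suffice.
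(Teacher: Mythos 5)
Your overall strategy is the same as the paper's: narrow the possible dip positions via Theorem~\ref{thp} (and Remark~\ref{r2}), then exclude each remaining position using the decomposition $\mathbf{f}=\mathbf{h}^{(sc)}\cdot H$ and the positivity/symmetry/unimodality of $\mathbf{h}^{(sc)}$ from Lemma~\ref{ad1}. Your telescoping device $h_i^{(sc)}=g_0+\cdots+g_i$ with $g_j\geq 0$ (valid since a symmetric unimodal vector is non-decreasing up to its centre) is a concrete way to exploit the unimodality that the paper leaves implicit, and your case tabulation of candidate positions for $d\leq 10$ is correct.

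However, there is a genuine gap in the reduction for $d=10$. You claim that to rule out a dip at $p$ it suffices to show $f_p\geq f_{p-1}$ or $f_p\geq f_{p+1}$. This is true when $p$ is the unique candidate position (as for $d\in\{5,7,8,9\}$), because then Theorem~\ref{thp} makes $f_{p-1}$ and $f_{p+1}$ the largest entries on either side of $p$. But for $d=10$ the monotonicity from Theorem~\ref{thp} only gives $f_0<f_1<f_2\leq f_3$ and $f_6>f_7>f_8>f_9$, so a dip at $4$ means $f_3>f_4<\max(f_5,f_6)$ and a dip at $5$ means $\max(f_3,f_4)>f_5<f_6$. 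Your local checks $f_3>f_4<f_5$ and $f_4>f_5<f_6$ miss the two \emph{non-local} patterns $f_3>f_4<f_6$ and $f_3>f_5<f_6$; in particular the hypothetical configuration $f_3>f_4=f_5<f_6$ passes both of your tests yet is non-unimodal. The paper's proof explicitly enumerates and refutes all four patterns $f_3>f_4<f_5$, $f_3>f_4<f_6$, $f_3>f_5<f_6$, $f_4>f_5<f_6$, which is exactly the case split needed; your argument must be extended to cover the two additional non-local cases (equivalently, to prove that the length-four subvector $(f_3,f_4,f_5,f_6)$ is itself unimodal). Once that is corrected, the tail-sum computations you describe are the right kind of bookkeeping, though both you and the paper leave them as a finite check to be carried out.
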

  \begin{proof}
  First let $P$ be a cubical $10$-polytope with $f$-vector $\textbf{f}=(f_0,\ldots,f_9)$. It follows from Theorem \ref{thp}, that $f_0<f_1< f_2\leq f_3$ and $f_6>\ldots> f_9$. Hence, $\textbf{f}$ can possibly dip at $4$ and $5$. Using Lemma \ref{ad1} and the relation $\textbf{f}=\textbf{h}^{(sc)}\cdot H$, one can show that all the assumptions $$f_3>f_4<f_5,\hspace{1mm} f_3>f_4<f_6, \hspace{1mm}f_3>f_5<f_6 \hspace{1mm}\text{ and }\hspace{1mm} f_4>f_5<f_6$$ lead to contradictions. Consequently, the face vector of $P$ is unimodal indeed.
  For $d<10$, similar reasoning completes the proof.
  \end{proof}
  
  The method of the above proof does not lead to an analogous result in the case $d=11$. However, we could construct some symmetric unimodal vector $\textbf{v}$ (whose components are positive integers), such that
  $v\cdot H$ would be non-unimodal, we could not guarantee  that there would exist some 11-polytope, whose short cubical $h$-vector would equal $\textbf{v}$. Since the complete combinatorial  characterization of cubical polytopes is not known, the following question still remains open:
\begin{question}Is there any cubical $11$-polytope with non-unimodal face vector?
\end{question}

To violate the unimodality in dimension $d=11$, for the role  of the $h$-vector, we need a candidate with an ``outlandish shape'', namely, its middle component should be relatively large (compared to other components). Hence, by considering the characterization of simplicial polytopes, one may believe that the answer to the above question is negative.

\end{document}